\theoremstyle{plain}
\newtheorem{lemma}{Lemma}[]
\newtheorem{thm}[lemma]{Theorem}
\newtheorem{prop}[lemma]{Proposition}
\newtheoremstyle{dotless}{}{}{\itshape}{}{\bfseries}{}{ }{}
\theoremstyle{dotless}
\newtheorem*{void*}{}
\theoremstyle{definition}
\theoremstyle{remark}
\newtheorem{remark}{Remark}
\newcommand{\M}{\mathcal{M}}
\newcommand{\R}{\mathbb{R}}
\newcommand{\di}{\mathrm{d}}
\newcommand{\disp}{\displaystyle}
\numberwithin{equation}{section} 
\title{Splitting theorems on complete Riemannian manifolds with nonnegative Ricci curvature}
\author{Alberto Farina\thanks{Universit\'e de Picardie Jules Verne. Email: alberto.farina@u-picardie.fr}, Jes\'us Oc\'ariz\thanks{Universidad Aut\'onoma de Madrid. Email: jesus.ocariz@uam.es}}
\begin{document}
\maketitle

\begin{abstract}
In this paper we provide some local and global splitting results on complete Riemannian manifolds with nonnegative Ricci curvature. We achieve the splitting through the analysis of some pointwise inequalities of Modica type which hold true for every bounded solution to a semilinear Poisson equation.
More precisely, we prove that the existence of a nonconstant bounded solution $u$ for which one of the previous inequalities becomes an equality at some point leads to the splitting results as well as to a classification of such a solution $u$.
\end{abstract}

\section{Introduction and Main Results}
Throughout this paper we shall denote by $(\M, g)$, or simply by $\M$, a complete, connected, smooth ($C^\infty$)  {boundaryless} Riemannian manifold of dimension $m\ge2$ with nonnegative Ricci curvature
and we shall consider the nonlinear Poisson equation over $\M$,

\begin{equation}\label{eqn:poisson}
-\Delta_g u + f(u)=0,
\end{equation}
where $f=F'$ is the first derivative of a function $F\in C^2(\mathbb{R})$ and $\Delta_g$ is the Laplace-Beltrami operator on $\M$. 

\vspace{0.5cm}

In the seminal paper \cite{modica1985gradient}, L. Modica proved the following pointwise gradient estimate for bounded solutions $u\in C^3(\R^m)$ of \eqref{eqn:poisson} with $F\geq 0 $ in the euclidean case:
\begin{equation}\label{eqn:modica}
\frac{1}{2} |\nabla u|^2(x) \leq F(u(x)) \hspace{1cm} \forall x\in \R^m.
\end{equation}

\vspace{0.1cm}

This important inequality, known as Modica's estimate, has been widely studied. In particular, it also holds in a Riemmanian geometric setting, i.e., the pointwise inequality 
	\begin{equation}\label{eqn:modicaM}
	\frac{1}{2} |\nabla_g u|^2(x) \leq F(u(x)) \hspace{1cm} \forall x\in \M
	\end{equation}
holds true under the assumptions that $(\M, g)$ is a complete Riemannian manifold with nonnegative Ricci curvature and $ F(t) \ge 0$ for every t $\in \R$, as proved in \cite{ratto1995gradient} and \cite{farina2010pointwise}.

\vspace{0.4cm}

Our first splitting theorem is

\begin{thm}\label{Thm:SplitClassicalModica}
Let $u\in C^3(\M)$ be a bounded solution of $\eqref{eqn:poisson}$ with $F\geq 0$. Suppose that there is a point $x_0$ such that equality is achieved at $\eqref{eqn:modicaM}$. Then, either $u$ is constant or $\M$ splits as the Riemannian product $\mathcal{N}\times \R$ where $\mathcal{N}\subset \M$ is a totally geodesic and isoparametric hypersurface with Ric($\mathcal{N})\geq 0$. Furthermore, $u:\mathcal{N}\times \R\to \mathbb{R}$ is such that $u(p,s)=\varphi(s)$ where $\varphi$ is a bounded and strictly monotone solution of the ODE, $\varphi^{''}=f(\varphi)$.
\end{thm}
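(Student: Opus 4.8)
The plan is to study the function $P:=\tfrac12|\nabla_g u|^2-F(u)$, which by \eqref{eqn:modicaM} satisfies $P\le 0$ on $\M$ and $P(x_0)=0$, so that $x_0$ is an interior maximum of $P$; the goal is to show this equality propagates, i.e. $P\equiv 0$. The tool is the Bochner formula combined with a refined Kato inequality. Using $\Delta_g u=f(u)$ and $\Delta_g F(u)=f'(u)|\nabla_g u|^2+f(u)^2$ one computes, on the open set $\Omega:=\{\nabla_g u\neq 0\}$,
\[
\Delta_g P=|\nabla_g^2 u|^2+\mathrm{Ric}(\nabla_g u,\nabla_g u)-f(u)^2 .
\]
Splitting the Hessian along $\nu:=\nabla_g u/|\nabla_g u|$ and the level sets of $u$ and applying Cauchy--Schwarz to the tangential block gives $|\nabla_g^2 u|^2\ge\big|\nabla_g|\nabla_g u|\big|^2+\tfrac{1}{m-1}\big(\Delta_g u-\nabla_g^2 u(\nu,\nu)\big)^2$, while the identity $\nabla_g|\nabla_g u|=|\nabla_g u|^{-1}\nabla_g P+f(u)\nu$ yields $\big|\nabla_g|\nabla_g u|\big|^2-f(u)^2=|\nabla_g u|^{-2}|\nabla_g P|^2+2f(u)|\nabla_g u|^{-2}\langle\nabla_g P,\nabla_g u\rangle$. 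Since $\mathrm{Ric}\ge 0$, discarding the three nonnegative terms produces the elliptic differential inequality
\[
\Delta_g P\ \ge\ \langle X,\nabla_g P\rangle\quad\text{on }\Omega,\qquad X:=\frac{2f(u)}{|\nabla_g u|^2}\,\nabla_g u ,
\]
with $X$ smooth on $\Omega$ and $P\in C^2(\M)$.

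Next I would show that $Z:=\{P=0\}$ is open (it is clearly closed and nonempty), hence $Z=\M$ by connectedness. Let $x_1\in Z$. If $\nabla_g u(x_1)\neq 0$, then $P$ attains its maximal value $0$ at the interior point $x_1$ of $\Omega$ while satisfying the inequality above, so the strong maximum principle forces $P\equiv 0$ near $x_1$. If $\nabla_g u(x_1)=0$, then $0=\tfrac12|\nabla_g u(x_1)|^2=F(u(x_1))$, and since $F\ge 0$ this forces $F(c)=F'(c)=0$ for $c:=u(x_1)$; Taylor expansion gives $F(c+v)=O(v^2)$, so \eqref{eqn:modicaM} yields $|\nabla_g(u-c)|\le C|u-c|$ near $x_1$, and a Gronwall estimate along geodesics issuing from $x_1$ forces $u\equiv c$ near $x_1$, hence $P\equiv 0$ near $x_1$. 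Thus $P\equiv 0$ on $\M$. The same Gronwall argument shows the critical set $\{\nabla_g u=0\}$ is open (it is also closed), so if $u$ is nonconstant then $\Omega=\M$: $u$ has no critical point.

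Assume $u$ nonconstant. Since $P\equiv 0$, every inequality used above is an equality: $\mathrm{Ric}(\nabla_g u,\nabla_g u)\equiv 0$, the tangential part of $\nabla_g^2 u$ vanishes, and $\nabla_g^2 u=f(u)\,\nu^\flat\otimes\nu^\flat$ on all of $\M$. Combining this with $\nabla_g u=|\nabla_g u|\,\nu$ gives $\nabla_g\nu\equiv 0$, so $\nu$ is a globally defined unit parallel vector field with $\mathrm{Ric}(\nu,\nu)\equiv 0$; its integral curves are complete geodesics, and each level set of $u$ is totally geodesic, its second fundamental form being $|\nabla_g u|^{-1}$ times the restriction of $\nabla_g^2 u$ to the tangent space, which is zero. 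Note also that $u$ is already an isoparametric function, since $\Delta_g u=f(u)$ and $|\nabla_g u|^2=2F(u)$ are functions of $u$.

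It remains to globalize the splitting. Let $\mathcal{N}$ be the connected component of $\{u=u(x_0)\}$ through $x_0$; by the previous paragraph it is a closed embedded totally geodesic hypersurface, hence complete. Let $\Phi_s$ be the flow of $\nu$ and define $\Psi:\mathcal{N}\times\R\to\M$ by $\Psi(p,s):=\Phi_s(p)$. Since $\nu$ is unit and parallel, each $\Phi_s$ is an isometry preserving $\nu^\perp$, so $\Psi$ is a local isometry for the product metric $g|_{\mathcal N}+\di s^2$; as $\mathcal{N}\times\R$ is complete, $\Psi$ is a Riemannian covering, and it is injective because $s\mapsto u(\Phi_s(p))$ has derivative $|\nabla_g u|>0$ and is therefore strictly increasing. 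Hence $\Psi$ is an isometry and $\M$ splits as $\mathcal{N}\times\R$, with $\mathrm{Ric}(\mathcal N)=\mathrm{Ric}_\M|_{T\mathcal N}\ge 0$ and $\mathcal{N}$ isoparametric (all slices are totally geodesic, hence have constant mean curvature $0$). Writing $u(p,s)=\varphi(s)$ for the restriction of $u$ to the slices, $\varphi$ is bounded, strictly monotone ($\varphi'=|\nabla_g u|>0$), and from $\Delta_g u=\Delta_{\mathcal N}\varphi+\varphi''=\varphi''$ together with \eqref{eqn:poisson} we get $\varphi''=f(\varphi)$. I expect the main difficulties to be in steps two and four: propagating the equality across the critical set and, above all, upgrading the local de Rham splitting provided by the parallel field $\nu$ to a genuine global Riemannian product — precisely where completeness of $\M$ and the strict monotonicity of $u$ along the flow lines of $\nu$ enter.
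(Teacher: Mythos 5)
Your proposal is correct and, at its core, follows the same strategy as the paper: the $P$-function $\tfrac12|\nabla_g u|^2-F(u)$, an elliptic differential inequality of Bochner type on $\{\nabla_g u\neq 0\}$ (the paper's \eqref{eqn:PPMAX}), the strong maximum principle to propagate the equality, the Gronwall-along-geodesics argument at critical points (this is exactly Proposition \ref{prop:Liouville} and Remark \ref{RemLiouville}, which the paper's rather terse proof of Theorem \ref{Thm:SplitClassicalModica} implicitly relies on, and which you spell out more carefully), and finally a global unit parallel vector field whose flow produces the splitting $\mathcal{N}\times\R$ and the ODE $\varphi''=f(\varphi)$ for the profile. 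The genuine differences are in execution: where the paper changes variables, setting $v=H(u)$ with $H'=(2F-2c_u)^{-1/2}$ to obtain a harmonic function with $|\nabla_g v|=1$ and then invokes Lemma \ref{parallel} to conclude that $\nabla_g v$ is parallel, you extract the rigidity directly from $P\equiv 0$: equality in the refined Kato inequality forces $\nabla^2_g u=f(u)\,\nu^\flat\otimes\nu^\flat$ and $\mathrm{Ric}(\nabla_g u,\nabla_g u)\equiv 0$, whence $\nu=\nabla_g u/|\nabla_g u|$ is parallel — this avoids the auxiliary substitution (note $\nu=\nabla_g v$, so the two routes produce the same field) and is arguably cleaner, since it never needs $2F(u)-2c_u$ in a denominator away from where equality is already known. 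You also prove the globalization (local isometry from the complete product, hence a Riemannian covering, hence an isometry once injective) rather than citing Sakai/Petersen-type references as the paper does. One point to tighten: your injectivity argument for $\Psi(p,s)=\Phi_s(p)$ only invokes the strict monotonicity of $s\mapsto u(\Phi_s(p))$ for a fixed $p$; to rule out $\Phi_{s_1}(p_1)=\Phi_{s_2}(p_2)$ with $p_1\neq p_2$ you should first note that $u\circ\Phi_s$ is constant on $\mathcal{N}$ (the flow of the parallel field $\nu$ preserves the distribution $\nu^{\perp}$ and $\nabla_g u$ is parallel to $\nu$, or argue by uniqueness for the ODE $y''=f(y)$ with the same initial data on the level set), so that $u(\Phi_{s_1}(p_1))=u(\Phi_{s_2}(p_2))$ forces $s_1=s_2$ and then $p_1=p_2$ because $\Phi_{s_1}$ is a diffeomorphism. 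With that small addition the argument is complete.
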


\begin{remark}
An analogous conclusion about the one dimensionality of the solutions also appears in \cite{caffarelli1994gradient} for the euclidean case, i.e. $\R^m$ endowed with its standard flat metric. Nevertheless, the splitting result is totally new in the general geometric setting that we are considering here.

\end{remark}

The inequality \eqref{eqn:modicaM} is no longer true if we drop the assumption: $F(t) \geq 0$ for every $t \in \R$,
as one can easily see by considering the function $u(x) = \cos(x)$, which solves $ \Delta u = F'(u)$ in $\R^m$  with $F(t) = - \frac{t^2}{2}$. This problem is overcome in the next result where a refined and sharp form of Modica's estimate is obtained for any nonlinear function $ F \in C^2(\R).$

\begin{prop}\label{thm:ModGen} Let $u\in C^3(\M)$ be a bounded solution of $\eqref{eqn:poisson}$. Then,
\begin{equation}\label{eqn:modicaGen}
\frac{1}{2} |\nabla_g u|^2(x) \leq F(u(x)) -c_u \hspace{1cm} \forall x\in \M,
\end{equation}
where
\begin{equation}\label{eqn:c_u}
c_u:= \inf_{y\in \M} F(u(y)).
\end{equation}
\end{prop}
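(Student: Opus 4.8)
The plan is to reduce the statement to the already-known Riemannian Modica estimate \eqref{eqn:modicaM}, applied not to $F$ itself but to a suitable nonnegative modification of $F-c_u$. First note that $c_u$ is a real number: since $u$ is bounded, its range $u(\M)$ is contained in a compact interval, and since $F$ is continuous, $F\circ u$ is bounded, so $c_u=\inf_\M F(u)>-\infty$. If $u$ is constant the inequality is trivial (both sides vanish), so assume $u$ is nonconstant and set $m_0:=\inf_\M u<M_0:=\sup_\M u$. Because $\M$ is connected and $u$ is continuous, $u(\M)$ is an interval with $(m_0,M_0)\subseteq u(\M)\subseteq[m_0,M_0]$; in particular, by the definition of $c_u$ and by continuity of $F$, we obtain $F(t)-c_u\ge 0$ for every $t\in[m_0,M_0]$.

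Next I would construct a potential $\tilde F\in C^2(\R)$ with two properties: (i) $\tilde F\equiv F-c_u$ on $[m_0,M_0]$, and (ii) $\tilde F\ge 0$ on all of $\R$. This is elementary: extend the $C^2$ function $F-c_u$ from $[m_0,M_0]$ to a $C^2$ function on $\R$ in any way, and then add to it a nonnegative $C^\infty$ function which vanishes on $[m_0,M_0]$ and is chosen large enough outside this interval to absorb any negative values. Since $F-c_u$ is already nonnegative on $[m_0,M_0]$, property (ii) is achieved while (i) is preserved.

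Finally I would apply \eqref{eqn:modicaM} to $u$ with potential $\tilde F$. Since $u(x)\in[m_0,M_0]$ for every $x\in\M$ and $\tilde F'=(F-c_u)'=f$ on $[m_0,M_0]$, the function $u$ is also a bounded $C^3$ solution of $-\Delta_g u+\tilde F'(u)=0$ on $\M$; moreover $\tilde F\ge 0$ on $\R$. Hence the Riemannian Modica estimate yields $\frac12|\nabla_g u|^2(x)\le\tilde F(u(x))=F(u(x))-c_u$ for every $x\in\M$, which is exactly \eqref{eqn:modicaGen}.

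There is no serious obstacle here; the only point requiring a little care is that $F-c_u$ must be checked to be nonnegative on the whole closed interval $[m_0,M_0]$ and not merely on the (possibly non-closed) range $u(\M)$ — this is where connectedness of $\M$ and continuity of $F$ enter — after which the smooth nonnegative extension of $F$ off the range of $u$, and the fact that such a modification changes neither the equation solved by $u$ nor the value of $F(u(x))$, are routine.
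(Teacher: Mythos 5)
Your reduction strategy is not the paper's route, and as written it has a genuine gap: the modified potential $\tilde F$ you need does not exist in general. Any $\tilde F$ for which $u$ still solves \eqref{eqn:poisson} and which returns exactly \eqref{eqn:modicaGen} must coincide with $F-c_u$ on the whole interval $[m_0,M_0]$ (your notation $m_0=\inf_\M u$, $M_0=\sup_\M u$). But by Proposition \ref{prop:constantModica} the infimum $c_u$ is attained at $F(\inf u)$ or $F(\sup u)$, so $F-c_u$ always vanishes at an endpoint of that interval, and nothing forces $f$ to vanish there: in the paper's own example (Remark \ref{rem:exampleSingular}) one has $c_u=F(\sup u)$ while $f(\sup u)=-(\sup u)^{\frac{m+2}{m-2}}<0$. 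In such a case no nonnegative $C^1$ extension of $F-c_u$ beyond $M_0$ can exist, because $\tilde F(M_0)=0$ together with $\tilde F\ge 0$ on $\R$ makes $M_0$ a global minimum and forces $\tilde F'(M_0)=0\neq f(M_0)$. Your specific device of ``adding a nonnegative $C^\infty$ function vanishing on $[m_0,M_0]$'' cannot repair this: such a function has zero value and zero derivative at $M_0$, hence is $o(|t-M_0|)$, while the extension it must correct dips below zero linearly, like $f(M_0)(t-M_0)$. So the construction collapses precisely in the cases where \eqref{eqn:modicaGen} genuinely improves \eqref{eqn:modicaM}; the refined estimate is not a formal consequence of the $F\ge 0$ statement obtained by truncating the potential outside the range of $u$.

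The gap is fixable, but it needs an extra idea. One repair within your strategy is an $\varepsilon$-relaxation: for each $\varepsilon>0$ build $\tilde F_\varepsilon\in C^2(\R)$ with $\tilde F_\varepsilon\ge 0$ on $\R$ and $\tilde F_\varepsilon=F-c_u+\varepsilon$ on $[m_0,M_0]$ (now the values at the endpoints are at least $\varepsilon>0$, so one can glue the second-order Taylor polynomial there and bend it upward before it reaches zero); applying \eqref{eqn:modicaM} to $u$ with potential $\tilde F_\varepsilon$ gives $\frac12|\nabla_g u|^2\le F(u)-c_u+\varepsilon$, and letting $\varepsilon\to 0$ yields \eqref{eqn:modicaGen}. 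The paper instead does not pass through \eqref{eqn:modicaM} at all: it quotes the Euclidean and compact cases from \cite{farina2008bernstein}, \cite{farina2010pointwise}, \cite{farina2011pointwise}, and in the complete noncompact case it uses that $|\nabla_g u|$ is bounded with $\inf_\M|\nabla_g u|=0$ and applies Theorem B of \cite{ratto1995gradient} directly with $Q(u)=F(u)-c_u$, i.e.\ with a potential required to be nonnegative only on the range of $u$.
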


The above proposition recovers and improves the results in \cite{farina2008bernstein}, \cite{farina2010pointwise} established in the euclidean setting as well as those proved in \cite{farina2011pointwise} for compact Riemannian manifolds with nonnegative Ricci curvature. 

\begin{remark}\label{rem:sharpModGen}
Note that the bound \ref{eqn:modicaGen} is always sharp (by definition of $ c_u$) and that it is achieved everywhere by any entire 1D solution of a semilinear Poisson equation over $\R^m$ endowed with the flat metric (just multiply the ODE $u'' = F'(u)$ by $u'$ and then integrate the resulting identity). On the other hand, the bound \eqref{eqn:modicaM} is not necessarily sharp as shown by any 1D periodic solution of the Allen-Cahn equation $ \Delta u = u^3 - u$ on $\R^m$ (here $F(t) = \frac{(t^2-1)^2}{4}$). 

\end{remark}

\begin{remark}\label{rem:propM}
If $F\equiv 0$, we obtain, in particular, the classical result that the only bounded harmonic functions are the constant functions. It is well-known that the topological properties of connectivity, completeness and the absence of boundary are necessary so that Proposition \ref{thm:ModGen} is true. With respect to the curvature condition, on the one hand, if Ric$(\M)\geq 0$ with the previous properties implies the statement (Corollary 1 in \cite{yau1975harmonic}) and, on the other hand, if Ric$(\M)\leq -C$ (with $C>0$), there may exist nonconstant bounded harmonic functions (\cite{anderson1983dirichlet}).    
\end{remark}

The following proposition explains how the constant $c_u$ of the generalized Modica's estimate can be computed effectively using the extrema of the solution $u$.

\begin{prop}\label{prop:constantModica}
Let $u\in C^3(\M)$ be a bounded nonconstant solution of $\eqref{eqn:poisson}$. Then, $\eqref{eqn:modicaGen}$ holds with constant
$$
c_u=\min\left\{F\left(\inf_{x\in \M} u(x)\right),F\left(\sup_{x\in \M} u(x)\right)\right\}
$$
and
$$
c_u < F(t) \qquad \forall \,\, t \in \Big( \inf u, \, \sup u \Big) 
$$
\end{prop}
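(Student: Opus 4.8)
First I would collect the elementary facts. Put $a:=\inf_{\M} u$ and $b:=\sup_{\M} u$; since $u$ is bounded and nonconstant, $-\infty<a<b<+\infty$, and since $\M$ is connected and $u$ is continuous, $u(\M)$ is an interval with $\inf u(\M)=a$ and $\sup u(\M)=b$, so that
\[
(a,b)\ \subseteq\ u(\M)\ \subseteq\ [a,b],\qquad \overline{u(\M)}=[a,b].
\]
By continuity of $F$ on the compact interval $[a,b]$ this already gives
\[
c_u\ =\ \inf_{y\in\M}F(u(y))\ =\ \inf_{t\in u(\M)}F(t)\ =\ \min_{t\in[a,b]}F(t).
\]
The plan is then to reduce both conclusions to the single inequality $F(t)>c_u$ for every $t\in(a,b)$: granting it, the minimum of $F$ over $[a,b]$ is attained only at the endpoints, so $c_u\in\{F(a),F(b)\}$, and since also $c_u\le F(a)$ and $c_u\le F(b)$, we obtain $c_u=\min\{F(a),F(b)\}$.

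To prove $F>c_u$ on $(a,b)$ I would argue by contradiction. Assume $F(t_0)=c_u$ for some $t_0\in(a,b)$. Since $(a,b)$ is an open neighbourhood of $t_0$ on which $F\ge F(t_0)$, the value $t_0$ is a local minimum of $F$, whence $f(t_0)=F'(t_0)=0$. Choose $x_0\in\M$ with $u(x_0)=t_0$, possible because $t_0\in(a,b)\subseteq u(\M)$. Then Proposition \ref{thm:ModGen} gives $\tfrac12|\nabla_g u|^2(x_0)\le F(u(x_0))-c_u=0$, so $\nabla_g u(x_0)=0$ and equality holds in \eqref{eqn:modicaGen} at $x_0$.

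The heart of the argument is to convert this equality into rigidity through Theorem \ref{Thm:SplitClassicalModica}. Consider the potential $\widetilde F:=F-c_u$: it is of class $C^2$, satisfies $\widetilde F\ge 0$ on $[a,b]\supseteq u(\M)$, and has $\widetilde F'=f$; since $u$ takes values in $[a,b]$, $u$ is still a bounded $C^3$ solution of $-\Delta_g u+\widetilde F'(u)=0$, and equality holds at $x_0$ in the unshifted estimate \eqref{eqn:modicaM} written for $\widetilde F$, because $\widetilde F(u(x_0))=F(t_0)-c_u=0=\tfrac12|\nabla_g u|^2(x_0)$. Applying Theorem \ref{Thm:SplitClassicalModica} to $\widetilde F$, and using that $u$ is nonconstant, $\M$ splits as $\mathcal N\times\R$ with $u(p,s)=\varphi(s)$ for a bounded and strictly monotone $\varphi$; but then $\nabla_g u(x_0)=0$ forces $\varphi'(s_0)=0$ at the $\R$-coordinate $s_0$ of $x_0$, contradicting the strict monotonicity of $\varphi$. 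This contradiction yields the desired inequality, and hence the Proposition.

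The step I expect to need the most care is precisely the application of Theorem \ref{Thm:SplitClassicalModica} to $\widetilde F$: that theorem assumes the potential nonnegative on all of $\R$, whereas $\widetilde F$ is only guaranteed nonnegative on $[a,b]$. When $F(a)>c_u$ and $F(b)>c_u$ this is immaterial, because one may modify $\widetilde F$ outside a neighbourhood of $[a,b]$ into a globally nonnegative $C^2$ function still equal to $F-c_u$ on $u(\M)$, which alters neither the equation for $u$ nor the equality at $x_0$. If $F$ equals $c_u$ at an endpoint --- say $F(b)=c_u$ with $F'(b)<0$ --- no such $C^2$ extension exists, and I would instead treat $x_0$ directly: writing $w:=u-t_0$ one has $\Delta_g w=c(x)\,w$ with $c(x):=\int_0^1 f'\big(t_0+\theta(u(x)-t_0)\big)\,\di\theta$ bounded (using $f(t_0)=0$), so if $w$ keeps one sign near $x_0$, the strong maximum principle (applicable since $-w$, or $w$, attains an interior extremum of the appropriate sign, even though the zeroth-order coefficient has none) together with Aronszajn's unique continuation theorem forces $u\equiv t_0$ on $\M$, a contradiction; the one remaining case, in which $u$ takes values on both sides of $t_0$ in every neighbourhood of $x_0$, is exactly the configuration for which the splitting of Theorem \ref{Thm:SplitClassicalModica} --- hence the curvature hypothesis $\mathrm{Ric}\ge 0$ --- is indispensable.
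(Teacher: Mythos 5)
Your reduction to the statement ``$F(t)>c_u$ for every $t\in(\inf u,\sup u)$'' and the identification $c_u=\min_{[\inf u,\sup u]}F$ are correct, and your contradiction setup (an interior $t_0$ with $F(t_0)=c_u$ is a local minimum of $F$, so $f(t_0)=0$, and any $x_0$ with $u(x_0)=t_0$ is a critical point at which equality holds in \eqref{eqn:modicaGen}) matches the paper's starting point. But your proof is not complete. Your main route --- replacing $F-c_u$ by a globally nonnegative $C^2$ extension and invoking Theorem \ref{Thm:SplitClassicalModica} --- only works in your Case A, i.e.\ when $F(\inf u)>c_u$ and $F(\sup u)>c_u$, as you yourself observe. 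In the degenerate case (say $F(\sup u)=c_u$ with $f(\sup u)\neq 0$) you split further: if $u-t_0$ has a sign near $x_0$ your maximum-principle-plus-unique-continuation argument is fine, but in the remaining sub-case, where $u-t_0$ changes sign in every neighbourhood of $x_0$, you give no argument at all --- the closing sentence merely asserts that this is ``exactly the configuration'' where Theorem \ref{Thm:SplitClassicalModica} is indispensable, yet by your own analysis that theorem cannot be applied there (no nonnegative extension exists). So the proposal leaves a genuine gap, and the claim that the splitting/curvature machinery is indispensable for it is moreover inaccurate.

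The paper closes all cases at once with a much lighter tool (Proposition \ref{prop:Liouville}): since $t_0$ is a local minimum of the $C^2$ function $F$ with $F(t_0)=c_u$, one has $F(s)-c_u\le k(s-t_0)^2$ for $|s-t_0|<\delta$, and then \eqref{eqn:modicaGen} gives $|\nabla_g u|\le C\,|u-t_0|$ wherever $|u-t_0|<\delta$; integrating this differential inequality along minimizing geodesics issued from any point of $\{u=t_0\}$ (a Gronwall argument, no sign information needed) shows $\{u=t_0\}$ is open, hence all of $\M$ by connectedness, contradicting nonconstancy. This argument would also replace your Case A entirely, so you would not need the $C^2$-extension trick nor Theorem \ref{Thm:SplitClassicalModica} at all; besides being incomplete, routing an elementary Liouville-type statement through the global splitting theorem is considerably heavier than necessary (though, to be fair, it is not circular, since the paper's proof of Theorem \ref{Thm:SplitClassicalModica} does not use Proposition \ref{prop:constantModica}). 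To repair your text, either insert the Gronwall argument above in place of the final sub-case, or at least apply it there; as written, the statement is not proved.
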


Besides, we study what happens when equality holds, at a regular point $x_0 \in \M,$ in the generalization of Modica's estimate \eqref{eqn:modicaGen}.
This leads to a local splitting of the manifold in a neighborhood of $x_0$ and to a precise description of the solution in this neighborhood. 

\begin{thm}\label{prop:Rigidity}
	Let $u$ be as in Proposition \ref{thm:ModGen}. Suppose that equality is achieved in $\eqref{eqn:modicaGen}$ at a regular point $x_0$, i.e. $\nabla_g u(x_0)\neq 0$. Then,
	\begin{itemize}
		\item equality in $\eqref{eqn:modicaGen}$ holds in the connected component of $\M \cap \{\nabla_g u \neq 0 \}$ that contains $x_0$,
		\item $Ric_g(\nabla_g u, \nabla_g u)$ vanishes at the connected component of $\M \cap \{\nabla_g u \neq 0 \}$
	    that contains $x_0$,
		\item there is a neighborhood of $x_0$, $\mathcal{U}\subset \M$, that splits as the Riemannian product $\mathcal{N}\times I$ where $\mathcal{N}\subset \M$ is a totally geodesic and isoparametric hypersurface with Ric($\mathcal{N})\geq 0$ and $I\subseteq \R$ is an interval,
		\item the solution $u$ restricted to the neighborhood $\mathcal{U}$, $u:\mathcal{N}\times I\to \mathbb{R}$, is equal to $u(p,s)=\varphi(s)$ where $\varphi$ is a bounded and strictly monotone solution of the ODE, $\varphi^{''}=f(\varphi)$.
	\end{itemize}
\end{thm}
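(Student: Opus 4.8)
The plan is to work with the function $P(x) := \tfrac12|\nabla_g u|^2(x) - F(u(x)) + c_u$, which by Proposition \ref{thm:ModGen} satisfies $P\le 0$ on $\M$ and, by hypothesis, $P(x_0)=0$ at the regular point $x_0$. The heart of the matter is a Bochner-type computation: differentiating the equation \eqref{eqn:poisson} and using the Bochner--Weitzenb\"ock formula $\tfrac12\Delta_g|\nabla_g u|^2 = |\nabla_g^2 u|^2 + \langle\nabla_g u,\nabla_g\Delta_g u\rangle + \operatorname{Ric}_g(\nabla_g u,\nabla_g u)$, one obtains, on the open set $\{\nabla_g u\neq0\}$, a differential inequality of the form $\Delta_g P \ge \langle X,\nabla_g P\rangle$ for a locally bounded vector field $X$ (this is precisely the computation underlying the proof of Proposition \ref{thm:ModGen}; one keeps track of the Kato-type refinement $|\nabla_g^2 u|^2 \ge |\nabla_g|\nabla_g u||^2$ plus the term coming from the level-set geometry). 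Since $P$ attains its maximum value $0$ at the interior point $x_0$, the strong maximum principle forces $P\equiv 0$ on the connected component $\Omega$ of $\{\nabla_g u\neq 0\}$ containing $x_0$; this gives the first bullet. Tracing back through the equality cases in the Bochner identity then yields $\operatorname{Ric}_g(\nabla_g u,\nabla_g u)\equiv 0$ on $\Omega$ (second bullet), that the Hessian $\nabla_g^2 u$ has the rank-one form $\nabla_g^2 u = \lambda\, \nu^\flat\otimes\nu^\flat$ with $\nu = \nabla_g u/|\nabla_g u|$, and that $|\nabla_g u|$ is constant along the level sets of $u$.

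From these infinitesimal rigidity facts I would build the local product structure. The equality in the Kato inequality together with $\nabla_g^2 u = \lambda\,\nu^\flat\otimes\nu^\flat$ says that $\nu$ is a parallel-transported unit field along its own flow and that the distribution $\nu^\perp$ is totally geodesic and integrable; its integral leaves are the level sets $\mathcal N = \{u = \text{const}\}$, which are therefore totally geodesic hypersurfaces. Because $|\nabla_g u|$ depends only on $u$ (hence, after reparametrising by signed distance $s$ along the flow of $\nu$, is a function of $s$ alone) and $\nu$ is geodesic and parallel, the flow of $\nu$ gives, in a neighbourhood $\mathcal U$ of $x_0$, a Riemannian submersion onto an interval $I$ with totally geodesic fibres and a parallel horizontal field: this is exactly the hypothesis of the de Rham--type local splitting, so $\mathcal U \cong \mathcal N\times I$ isometrically, with $u(p,s)=\varphi(s)$. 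That the hypersurface is isoparametric follows since its mean curvature $\lambda = u''$ is a function of $u$ alone (constant on each leaf), and $\operatorname{Ric}(\mathcal N)\ge 0$ follows from the Gauss equation together with $\operatorname{Ric}_g\ge 0$ and the totally geodesic property. Finally, restricting \eqref{eqn:poisson} to $\mathcal U$ and using $\Delta_g u = \varphi''(s)$ (the fibres being totally geodesic, there is no first-order term) gives $\varphi'' = f(\varphi)$; boundedness of $\varphi$ is inherited from boundedness of $u$, and strict monotonicity holds because $\varphi' = \pm|\nabla_g u|\neq 0$ on $I$ — if $\varphi'$ vanished somewhere one would leave $\Omega$, contradicting $x_0$ being an interior regular point of the component on which everything is happening.

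The step I expect to be the main obstacle is making the passage from the pointwise/infinitesimal rigidity (parallel unit field $\nu$, totally geodesic level sets, $|\nabla_g u|$ constant on leaves) to a genuine \emph{isometric} local splitting $\mathcal U\cong\mathcal N\times I$ rigorous, rather than merely a diffeomorphism that looks like a product. One must verify that the map $(p,s)\mapsto \exp_p(s\,\nu)$ (or the flow of $\nu$) is a well-defined local isometry: this requires that the normal exponential map of a fixed leaf $\mathcal N$ through $x_0$ has no focal points locally (guaranteed by the leaves being totally geodesic and $\nu$ being parallel, so the Jacobi fields split and the metric is block-diagonal $g = ds^2 + g_{\mathcal N}$ with $g_{\mathcal N}$ independent of $s$), and a careful argument that the induced metric on each leaf does not depend on $s$ — which is precisely where "totally geodesic \emph{and} $\nu$ parallel" (rather than just $\nu$ geodesic) is used. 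A secondary technical point worth handling carefully is the regularity of $|\nabla_g u|$ near points where it might a priori vanish: the theorem's conclusions are all local and confined to the open component $\Omega$ where $\nabla_g u\neq 0$, so on $\mathcal U\subset\Omega$ the function $|\nabla_g u|$ is smooth and the Bochner computation is legitimate without any need for the delicate limiting arguments that plague the global theorem; this is why stating the result at a regular point substantially simplifies the analysis compared to Theorem \ref{Thm:SplitClassicalModica}.
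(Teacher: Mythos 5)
Your proof of the first two bullets is the same as the paper's: both apply the strong maximum principle to $P=\tfrac12|\nabla_g u|^2-F(u)+c_u$ on the connected component $\Omega$ of $\{\nabla_g u\neq0\}$ containing $x_0$, using the differential inequality behind Proposition \ref{thm:ModGen}. For the splitting, however, you take a genuinely different route. The paper's key device is the substitution $v=H(u)$ with $H'=(2F-2c_u)^{-1/2}$, which on $\Omega$ produces a \emph{harmonic} function with $|\nabla_g v|\equiv1$; Lemma \ref{parallel} (harmonic $+$ unit gradient $+$ $\mathrm{Ric}\ge0$) then yields a parallel field and the local de Rham-type splitting of Section 3. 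You instead extract the rigidity directly from the equality case: differentiating $P\equiv0$ gives $\nabla_g^2u(\nabla_g u)=f(u)\nabla_g u$, and inserting this and $\tfrac12|\nabla_g u|^2=F(u)-c_u$ into Bochner gives $|\nabla_g^2u|^2+\mathrm{Ric}_g(\nabla_g u,\nabla_g u)=f(u)^2$, which forces the rank-one Hessian $\nabla_g^2u=f(u)\,\nu^\flat\otimes\nu^\flat$ (and, incidentally, re-proves the second bullet without the maximum principle). This is the Caffarelli--Garofalo--Segala mechanism \cite{caffarelli1994gradient} transplanted to the manifold; its payoff is that it avoids the auxiliary harmonic function entirely, while the paper's trick has the advantage of reducing both the local and the global theorem to one clean geometric lemma. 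Two points in your sketch deserve tightening. First, the rank-one Hessian gives more than ``$\nu$ parallel-transported along its own flow'': a one-line computation ($\nabla_X\nu=\frac{\nabla_X\nabla_g u}{|\nabla_g u|}-\frac{\langle\nabla|\nabla_g u|,X\rangle}{|\nabla_g u|}\nu=0$, using $\nabla|\nabla_g u|=f(u)\nu$) shows $\nu$ is parallel in \emph{all} directions, and this full parallelism is exactly what the de Rham-type local splitting requires -- your weaker statement, as phrased, would not suffice, so you should state and use the stronger one. Second, $\mathrm{Ric}(\mathcal N)\ge0$ does not follow from the Gauss equation and total geodesy alone (the mixed sectional curvatures $\langle R(\nu,X)X,\nu\rangle$ are not signed in general); it follows because $\nu$ parallel kills those mixed curvatures, or simply because once $\mathcal U\cong\mathcal N\times I$ isometrically the Ricci curvature of the product restricted to $T\mathcal N$ is that of $\mathcal N$. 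With these two repairs your argument is complete and correct, and the remaining items (equality $|\nabla_g u|^2=2F(u)-2c_u$ giving $u=\varphi(s)$, $\Delta_g u=\varphi''$ in the product metric, boundedness and strict monotonicity of $\varphi$ on $\mathcal U\subset\Omega$) match the paper's conclusions.
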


\begin{remark}\label{rem:exampleLocalSplitting}
We underline that the result of local splitting obtained in the aforementioned Theorem \ref{prop:Rigidity} is sharp. Indeed, a global splitting result, like the one demonstrated in Theorem \ref{Thm:SplitClassicalModica}, is no longer true when $F$ is not nonnegative on $\R$. An example showing this phenomenon can be built in the following way : consider any Riemannian manifold $\mathcal{N}$ of dimension $m-1$, with nonnegative Ricci tensor and which does not contain any line (e.g. a round sphere) and set $ \M = \mathcal{N} \times \mathbb{S}^1$. The function $u(n,s) = \sin(s)$ is a solution of $\Delta_g u = -u$ for which the refined Modica's estimate \eqref{eqn:modicaGen} is achieved everywhere on $\M$ and in particular in many regular points of $u$. Nevertheless $\M$ does not split any euclidean factor.

\end{remark}

\begin{remark}\label{rem:exampleSingular} 
If a critical point achieves the equality in $\eqref{eqn:modicaGen}$, we cannot assure that there are more points which saturate the inequality, as shown by the following example in the euclidean space. 
For $m \geq3$ consider the function 
$u(x) = \Big ( \frac{{\sqrt{m(m-2)}}} {1 + \vert x \vert^2}  \Big )^{\frac{m-2}{2}}$, which is a radial smooth, positive, bounded solution of $ -\Delta u = u^{\frac{m+2}{m-2}}$ on the euclidean space $\R^m$.
In this case $ F(t) = - \frac{\vert t \vert^{p+1}}{p+1 \,\,}$, with $p = \frac{2m}{m-2},$ and $c_u = - \frac{(u(0))^{p+1}}{p+1}$ so, by integration we immediately get that $ \frac{1}{2} |\nabla u|^2 (x) = \frac{1}{2} \vert u' \vert^2 (\vert x\vert) = F(u(x)) - \frac{(u(0))^{p+1}}{p+1\,\,}$ for every $ x \in \R^m$. Therefore, the equality in 
$\eqref{eqn:modicaGen}$ is achieved at the origin, which is a critical point, while elsewhere inequality \eqref{eqn:modicaGen} is strict. 

\end{remark}

\vspace*{0.5cm}
The key point of the proof of the previous result is the construction of a local harmonic function whose gradient has constant length as long as the gradient of the solution does not vanish. This will be done in Section 4.

Some other splitting theorems were obtained in \cite{farina2013splitting} studying some kind of stable solutions of \eqref{eqn:poisson}.

The rest of the paper is devoted to the proofs of our contributions.

\vspace{0.5cm}
\textit{Acknowledgments}: 
The authors wish to thank Luciano Mari for useful discussions. The second author is partially supported by the grant MTM2017-83496-P from the Spanish Ministry of Economy and Competitiveness and through the “Severo Ochoa Programme for Centres of Excellence in R$\&$D”(SEV-2015-0554). The second author is also very grateful to the first author for his hospitality at Universit\'{e} de Picardie Jules Verne during a short research stay funded by the grant FPI with reference BES-2015-075151 from the Spanish government, in which this project was started.

\section{Proof of Propositions $\ref{thm:ModGen}$ and $\ref{prop:constantModica}$}

{\it {Proof of Proposition $\ref{thm:ModGen}$ : }} The result is already known for $\M = \R^m$ (see \cite{farina2008bernstein}, \cite{farina2010pointwise}) and when $\M$ is a compact manifold (see \cite{farina2011pointwise}). 

In the case of a complete, noncompact manifold we note that the gradient of the solution $u$ is bounded (see for instance Remark 48 in Appendix 1 of \cite{farina2013splitting} or Proposition 1 of \cite{ratto1995gradient}) and satisfies $ \inf_{\M} \vert \nabla u \vert =0$ (see for instance Appendix 1 of \cite{farina2013splitting}). The desired inequality \eqref{eqn:modicaGen} then follows from Theorem B of \cite{ratto1995gradient} applied with $ Q(u) = F(u) - c_u$. This concludes the proof of Proposition \ref*{thm:ModGen}. 

\bigskip

The key to prove Proposition \ref{prop:constantModica} is to see that if $c_u$ is achieved in an interior point of the interval $[\inf u,\sup u]$, then $u$ must be constant. This is the content of the next result. To prove it we follow \cite {farina2010pointwise} and  \cite{modica1985gradient}.

\medskip

\begin{prop}\label{prop:Liouville}
	Let $u\in C^3(\M)$ be a bounded solution of $\eqref{eqn:poisson}$ such that $c_u$ is achieved in an interior point of the interval $[\inf u,\sup u]$. Then, $u$ is constant.
	\end{prop}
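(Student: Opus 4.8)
\textbf{Proof proposal for Proposition \ref{prop:Liouville}.}

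The plan is to show that if $c_u = F(u(y_0))$ for some $y_0 \in \M$ with $u(y_0)$ lying strictly between $\inf u$ and $\sup u$, then the generalized Modica inequality \eqref{eqn:modicaGen} forces $\nabla_g u$ to vanish on a ``large'' set, and then to propagate this to all of $\M$ by a connectedness/unique-continuation argument, using the completeness and connectedness of $\M$. First I would observe that, by Proposition \ref{thm:ModGen}, at the point $y_0$ we have $\frac12|\nabla_g u|^2(y_0) \le F(u(y_0)) - c_u = 0$, so $y_0$ is a critical point of $u$. More importantly, I want to look at the function $P := \frac12|\nabla_g u|^2 - F(u) + c_u \le 0$ on $\M$; the hypothesis says $P$ attains its maximum value $0$ at $y_0$. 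The idea is that the Bochner/Modica machinery underlying Proposition \ref{thm:ModGen} shows that $P$ satisfies an elliptic differential inequality of the form $L P \ge 0$ (a linear second-order operator with locally bounded coefficients, degenerate where $\nabla_g u = 0$) on the open set $\{\nabla_g u \ne 0\}$, so that by the strong maximum principle $P$ cannot attain an interior maximum there unless it is locally constant. Since $P \le 0$ everywhere and $P(y_0) = 0$, one concludes $P \equiv 0$ on the connected component of $\{\nabla_g u \ne 0\}$ whose closure contains $y_0$ — but $y_0$ itself is a critical point, so I must be a bit careful near $y_0$.

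The cleaner route, following \cite{modica1985gradient} and \cite{farina2010pointwise}, is: let $t_0 = u(y_0) \in (\inf u, \sup u)$. Since $u$ is continuous and $\M$ is connected, the level set value $t_0$ is attained, and $u$ takes values on both sides of $t_0$ near-by. Consider the set $A := \{x \in \M : F(u(x)) = c_u\}$, equivalently $A = \{x : u(x) \in F^{-1}(\{c_u\}) \cap [\inf u, \sup u]\}$. On $A$ we have $\frac12|\nabla_g u|^2 \le 0$, hence $\nabla_g u = 0$ on $A$; moreover differentiating $F(u) = c_u$ (which holds on $A$, an equality constrained by a $\le$ bound attained) and using $\nabla_g u = 0$ there is consistent. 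The key step is to show $A$ is both open and closed in $\M$, hence all of $\M$, which gives $u$ constant. Closedness is immediate from continuity. For openness: at a point $x \in A$ we have $\nabla_g u(x) = 0$; I would use the equation \eqref{eqn:poisson} together with the strict inequality $c_u < F(t)$ for $t$ strictly between consecutive "barrier" values, combined with an ODE-type comparison along geodesics, to conclude that $u$ stays at the value $t_0$ near $x$. Concretely, one reduces to a one-variable argument: along any unit-speed geodesic $\gamma$ from $x$, the function $h(s) = F(u(\gamma(s))) - c_u \ge \frac12|\nabla_g u|^2(\gamma(s)) = \frac12 |\partial_s (u\circ\gamma)|^2 \ge \frac12 \dot h(s)^2 / (\sup|f|)^2$-type estimate (using $\dot h = f(u)\langle \nabla u,\dot\gamma\rangle$), forcing $h$ to remain $0$ — this is the classical "Modica ODE trick": $h \ge 0$, $h(0)=0$, and $|\dot h| \le C\sqrt{h}$, so $h\equiv 0$.

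The main obstacle I anticipate is handling the degeneracy at critical points: the differential inequality for $P$ degenerates exactly where $\nabla_g u = 0$, so the strong maximum principle does not apply directly there, and $y_0$ is such a point. This is precisely why I would route the argument through the real-variable/ODE comparison along geodesics (the $|\dot h|\le C\sqrt h \Rightarrow h\equiv 0$ mechanism) rather than through the PDE maximum principle alone — the ODE argument is insensitive to whether the starting point is critical. A secondary technical point is verifying the precise differential inequality satisfied by $\frac12|\nabla_g u|^2 - F(u) + c_u$: this comes from the Bochner formula $\frac12\Delta|\nabla u|^2 = |\mathrm{Hess}\, u|^2 + \langle \nabla u, \nabla \Delta u\rangle + \mathrm{Ric}(\nabla u,\nabla u)$, the curvature hypothesis $\mathrm{Ric}\ge 0$, and the refined Kato-type inequality $|\mathrm{Hess}\,u|^2 \ge |\nabla|\nabla u||^2 + \frac{|\nabla|\nabla u||^2}{m-1}$ used on level sets — but all of this is exactly the content already invoked in the proof of Proposition \ref{thm:ModGen} via Theorem B of \cite{ratto1995gradient}, so I would cite it rather than redo it. Once $A = \M$, then $u(\M) \subseteq F^{-1}(\{c_u\})$ is contained in the (possibly disconnected) level set of $F$ at height $c_u$; since $u(\M)$ is connected (continuous image of a connected space) and $\nabla_g u \equiv 0$ forces $u$ locally constant on the connected $\M$, we conclude $u$ is constant.
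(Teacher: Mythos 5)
Your overall open-and-closed strategy along geodesics is the right one (it is essentially the paper's), but the concrete mechanism you give for openness is flawed, and the flaw is not a technicality. You reduce to: $h(s)=F(u(\gamma(s)))-c_u\ge 0$, $h(0)=0$, $|\dot h|\le C\sqrt{h}$ (via $\dot h=f(u)\langle\nabla u,\dot\gamma\rangle$ and \eqref{eqn:modicaGen}), and then assert that this forces $h\equiv 0$. That implication is false: $\sqrt{\cdot}$ fails the Osgood uniqueness criterion, and $h(s)=\tfrac{C^2}{4}s^2$ satisfies $h(0)=0$, $|\dot h|=C\sqrt h$, $h\not\equiv 0$. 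Worse, your argument nowhere really uses the hypothesis that $c_u$ is attained at an \emph{interior} point of $[\inf u,\sup u]$, and the proposition is false without it: the example of Remark \ref{rem:exampleSingular} has $c_u=F(u(0))$ with $u(0)=\sup u$ attained, your set $A=\{F(u)=c_u\}=\{0\}$ nonempty and closed, all your differential inequalities satisfied along rays from the origin, and yet $u$ is nonconstant. So any proof running only on the estimates you list must break down.

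What the paper does, and what is missing from your proposal, is to exploit interiority quantitatively: if $\alpha=u(y_0)$ is interior to $[\inf u,\sup u]$ and $F(\alpha)=c_u=\min_{[\inf u,\sup u]}F$, then $\alpha$ is an interior minimum of $F$ on that interval, so $F'(\alpha)=0$ and, since $F\in C^2$, Taylor gives the \emph{quadratic} bound $F(s)-c_u\le k(s-\alpha)^2$ for $|s-\alpha|<\delta$. Combined with Proposition \ref{thm:ModGen} this upgrades your square-root bound to a genuinely Lipschitz one for $\varphi(t)=u(\gamma(t))-\alpha$ along minimizing geodesics from a point of $\{u=\alpha\}$: $\tfrac12|\varphi'|^2\le F(u\circ\gamma)-c_u\le k\varphi^2$, i.e. $|\varphi'|\le C|\varphi|$ with $\varphi(0)=0$, and Gronwall now legitimately gives $\varphi\equiv 0$ on a normal ball. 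Hence $\{u=\alpha\}$ (note: the level set of the interior value $\alpha$, which is a better choice than your $A=\{F(u)=c_u\}$, since at points where $u$ equals an endpoint value the quadratic bound need not hold) is open and closed, and connectedness of $\M$ finishes the proof. Your maximum-principle musings in the first paragraph are correctly set aside (degeneracy at the critical point $y_0$), but the ODE route as you state it needs the $F'(\alpha)=0$ Taylor step to be valid.
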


\begin{proof} 
Suppose that $c_u$ is achieved at a interior point $u(x_0)=\alpha$, then $\alpha$ is a local minimum of $F$ and therefore verifies the following properties: $F(\alpha)=0$, $F'(\alpha)=0$ and $F''(\alpha)\geq 0$. Hence there exists $k\geq 0$ and $\delta >0$ such that
$$
F(s)-c_u\leq k (s-\alpha)^2
$$
for every $s$ such that $|s-\alpha|<\delta$.

Since $\M$ is connected, to obtain the desired conclusion it is enough to prove that the nonempty and closed set $A:=\left\{x\in \M: u(x)=\alpha \right\} $ is also open in $\M$. Pick $a \in A$. There is a normal neighborhood of $a$ of the form $B(a,r)$ for some $r=r(a) >0$ such that $|u(x)-\alpha| = |u(x)-u(a)| <\delta$ for every $x \in B(a, r)$ using the continuity of the function $u$. 
 
For any $x$ in $B(a, r) $ consider the unit speed minimizing geodesic $\gamma : [0, t_x] \rightarrow \M $ such that $ \gamma(0) = a$ and $ \gamma(t_x) = x$, where $t_x:=d(a,x)<r$. For all $ t \in [0,t_x]$ set 
$$
\varphi(t) := u(\gamma(t))-u(a)
$$
and observe that 
$$
\frac{1}{2}|\varphi'(t)|^2\leq \frac{1}{2} |\nabla_g u(\gamma(t))|^2\leq  F(u(\gamma(t))-c_u\leq k (u(\gamma(t))-\alpha)^2= k \varphi(t)^2
$$
since $ \gamma(t) \in B(a, r)$ for any $t \in [0,t_x]$ (recall that $ \gamma$ is minimizing). 

Therefore $ |\varphi'(t)|\leq C   \vert \varphi(t) \vert $ for some $C>0$ and for all $t \in [0,t_x]$ and so $ \varphi \equiv 0$ on $ [0,t_x],$ since $ \varphi(0) =0$. This means that $u(x) = u(a) = \alpha$ and so  $B(a, r) \subset A$, which concludes the proof.
\end{proof}

\begin{remark}\label{RemLiouville}
	Note that in the previous proof we have proved in particular the classical Liouville result. More specifically, if $c_u$ is a local minimum of $F$ and $u^{-1}(c_u)\neq \emptyset$, then $u$ must be constant. In particular, if a solution achieves the Modica's estimate \ref{eqn:modica} somewhere, it is either constant or doesn't have any critical points. Observe that all the hypothesis are necessary because otherwise there are easy counterexamples.
\end{remark}
\noindent {\it {Proof of Proposition $\ref{prop:constantModica}$.}} This is an immediate consequence of Proposition \ref{thm:ModGen} and the above Proposition \ref{prop:Liouville}.

\section{Some geometric results concerning the splitting}
To prove our splitting results we recall, for the sake of completeness, some results of Riemannian geometry (see for instance \cite{petersen2006riemannian}). 

\begin{lemma}
Let $E$ and $F$ be two distributions on $(\M,g)$, which are orthogonal complements of each other in $T\M$, and suppose that the distributions are parallel (i.e. if two vector fields $X$ and $Y$ are tangent to, say, $E$, then $\nabla_X Y$ is also tangent to $E$). Then,
\begin{itemize}
\item The distributions are integrable.
\item $\M$ is locally a product metric, i.e., there is a product neighborhood $U=V_E \times V_F$ such that $(U,g)=(V_E \times V_F, g_{|E}+g_{|F})$, where $g_{|E}$ and $g_{|F}$ are the restrictions of $g$ to the two distributions.
\end{itemize} 
\end{lemma}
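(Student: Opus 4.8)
The plan is to establish integrability via Frobenius' theorem, then produce a coordinate chart adapted to both foliations in which the metric manifestly decomposes. For integrability, recall that a smooth distribution is integrable precisely when it is involutive. If $X,Y$ are vector fields tangent to $E$, then since the Levi-Civita connection is torsion-free we have $[X,Y]=\nabla_X Y-\nabla_Y X$, and by the parallelism hypothesis both $\nabla_X Y$ and $\nabla_Y X$ are tangent to $E$; hence $[X,Y]$ is tangent to $E$, so $E$ is involutive. The identical argument applies to $F$. Thus both distributions are integrable, giving two complementary foliations whose tangent spaces at every point are $E$ and $F$.

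Next I would fix $p\in\M$ and choose a coordinate chart $(x^1,\dots,x^k,y^1,\dots,y^{m-k})$ centred at $p$, on a domain $U$, such that $\partial_{x^i}$ $(1\le i\le k)$ span $E$ and $\partial_{y^a}$ $(1\le a\le m-k)$ span $F$ throughout $U$. This simultaneous flattening of two complementary integrable distributions is a standard fact: starting from a foliated chart for $E$, in which the $E$-leaves are the slices $\{y=\text{const}\}$, the distribution $F$ is transverse to these slices, and adjusting the transverse coordinates so that they are constant along $E$-leaves (possible by transversality) yields the desired chart near $p$. Write $g_{ij}=g(\partial_{x^i},\partial_{x^j})$, $g_{ab}=g(\partial_{y^a},\partial_{y^b})$, and $g_{ia}=g(\partial_{x^i},\partial_{y^a})$.

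Since $E$ and $F$ are orthogonal complements and $\partial_{x^i}$ is a section of $E$ while $\partial_{y^a}$ is a section of $F$, we get $g_{ia}\equiv 0$, so the metric is block diagonal on $U$. To see that the first block depends only on $x$, compute $\partial_{y^a}g_{ij}=g(\nabla_{\partial_{y^a}}\partial_{x^i},\partial_{x^j})+g(\partial_{x^i},\nabla_{\partial_{y^a}}\partial_{x^j})$. Because coordinate vector fields commute, $\nabla_{\partial_{y^a}}\partial_{x^i}=\nabla_{\partial_{x^i}}\partial_{y^a}$, and this lies in $F$ by the parallelism of $F$, hence is orthogonal to $\partial_{x^j}\in E$; so the first term vanishes, and the second vanishes for the same reason. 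Therefore $\partial_{y^a}g_{ij}=0$, i.e.\ $g_{ij}=g_{ij}(x)$, and by the symmetric argument $g_{ab}=g_{ab}(y)$. Shrinking $U$ to a product $V_E\times V_F$ then gives $g=g_{ij}(x)\,\di x^i\,\di x^j+g_{ab}(y)\,\di y^a\,\di y^b=g_{|E}+g_{|F}$, the claimed local product metric.

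The only step that is not a short connection computation is the construction of the simultaneous flattening chart, so that is where I would either invoke the standard reference or supply the brief transversality argument sketched above; everything else follows formally from the torsion-free, metric-compatible properties of $\nabla$ together with the hypotheses that $E\perp F$ and that both are parallel.
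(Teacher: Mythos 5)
Your proposal is correct and follows essentially the same route as the paper: integrability via Frobenius and $[X,Y]=\nabla_X Y-\nabla_Y X$, a chart adapted simultaneously to the two foliations, and the verification that each metric block depends only on its own coordinates. One justification deserves a touch-up: you assert that $\nabla_{\partial_{x^i}}\partial_{y^a}$ lies in $F$ ``by the parallelism of $F$'', but the hypothesis as stated only guarantees that $F$ is closed under differentiation along directions tangent to $F$, whereas here you differentiate along $\partial_{x^i}\in E$. The claim is still true, and the cleanest repair avoids it altogether: $g(\nabla_{\partial_{y^a}}\partial_{x^i},\partial_{x^j})=g(\nabla_{\partial_{x^i}}\partial_{y^a},\partial_{x^j})=\partial_{x^i}\,g(\partial_{y^a},\partial_{x^j})-g(\partial_{y^a},\nabla_{\partial_{x^i}}\partial_{x^j})=0$ by orthogonality and the stated parallelism of $E$; this is precisely what the paper's Christoffel-symbol computation $\partial_\alpha g_{ij}=2\Gamma_{ij\alpha}=2g(\nabla_{\partial_{x^j}}\partial_{x^i},\partial_{y^\alpha})=0$ accomplishes. (Alternatively, note first that weak parallelism of $E$ together with $F=E^{\perp}$ upgrades $F$ to being parallel in the strong sense, after which your phrasing is fine.) As for the simultaneous flattening chart, the paper, like you, takes its existence for two complementary integrable distributions as a standard fact.
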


\begin{proof}
To prove that the distributions are integrable we just need to show they are involutive because these concepts are equivalents thanks to Fr\"obenius Theorem. Thus, we want to show that having two vector fields $X$ and $Y$ tangent to, say, $E$, then the Lie bracket $[X,Y]$ is also tangent to $E$. Then,
$$
[X,Y]= \nabla_X Y - \nabla_Y X
$$
is tangent to $E$ because $E$ is parallel. Analogously with $F$.

\vspace{0.5cm}
Since $E$ and $F$ are integrable distributions, for any $p\in \M$ there is a neighborhood of $p$ where we can take local coordinates $x_i$ and $y_\alpha$ where $1\leq i\leq n$ and $n+1\leq \alpha \leq m$ such that
\begin{align*}
E&=\text{span} \left\{ \frac{\partial}{\partial x_i}: 1\leq i\leq n \right\}, \\
F&=\text{span} \left\{ \frac{\partial}{\partial y_\alpha}: n+1\leq \alpha \leq  m \right\}.
\end{align*}
We just need to show that $g_{|E}$ (respect. $g_{|F}$) is independent of $y_\alpha$ (respect. $x_i$), i.e. $\partial_\alpha g_{ij}=0$ (respect.$\partial_i g_{\alpha \beta}=0$)
$$
\partial_\alpha g_{ij} = \partial_\alpha g_{ij}+\partial_i g_{j\alpha}- \partial_j g_{i\alpha} = 2 \Gamma_{ij\alpha}= 2 g\left(\nabla_{\frac{\partial}{\partial x^j}}\frac{\partial}{\partial x^i},\frac{\partial}{\partial y^\alpha} \right) =0,
$$
where the previous equalities holds because $E$ and $F$ are orthogonal and $E$ is parallel. The equality $\partial_i g_{\alpha \beta}=0$ is proven analogously.
\end{proof}

\begin{lemma}
Let $X$ be a parallel vector field on $(\M,g)$. Then, 
\begin{itemize}
\item $X$ has constant length.
\item $X$ generates parallel distributions, one that contains $X$ and the other that is the orthogonal complement to $X$.
\item Locally the metric is a product with an interval, $(U,g)=(V\times I, g_{|TV}+ \mathrm{d}t^2)$.
\end{itemize} 
\end{lemma}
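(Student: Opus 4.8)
The plan is to derive all three conclusions from the single hypothesis $\nabla X = 0$, using the preceding lemma for the local splitting.

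First I would prove the constant length: for any vector field $Y$, metric compatibility of the Levi-Civita connection gives $Y(g(X,X)) = 2\,g(\nabla_Y X, X) = 0$ because $X$ is parallel, so $g(X,X)$ is a nonnegative constant $c^2$ on the connected manifold $\M$. If $c = 0$ then $X \equiv 0$ and all three assertions are vacuous, so I would then assume $c > 0$, which also guarantees that $X$ is nowhere zero.

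Next I would introduce the rank-one distribution $E := \mathrm{span}\{X\}$ and its orthogonal complement $F := E^{\perp}$, and check directly that both are parallel. For $E$: a local section has the form $hX$ with $h$ smooth, and $\nabla_Y(hX) = (Yh)X$ lies in $E$. For $F$: if $g(W,X) \equiv 0$ then $g(\nabla_Y W, X) = Y(g(W,X)) - g(W, \nabla_Y X) = 0$, so $\nabla_Y W \in F$. With $E$ and $F$ in hand I would invoke the preceding lemma to obtain, around any point, a product neighborhood $U = V \times I$ with $(U,g) = (V \times I, g_{|TV} + g_{|TI})$, where $I \subseteq \R$ is the one-dimensional factor tangent to $E$. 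Finally, to pin down the metric on the $I$-factor I would use that the unit field $X/c$ is again parallel, hence its integral curves are geodesics, and parametrize the $I$-factor by arclength along them so that $\partial_t = X/c$ and $g_{|TI} = \di t^2$.

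The step I expect to be the only real (and still modest) obstacle is this last normalization: one has to make sure the arclength parameter along the leaves of $E$ can genuinely be used as the coordinate on the one-dimensional factor produced by the preceding lemma — i.e. that $\partial_t$ is that factor's coordinate vector field — and that replacing it by an arclength reparametrization (equivalently, dividing $X$ by the constant $c$) does not spoil the block-diagonal form of $g$. This is true because a rank-one distribution admits, near any point, a coordinate whose coordinate field is any prescribed nowhere-vanishing section, and this coordinate may be taken as the $I$-coordinate in the product chart. Beyond the preceding lemma, no further geometric input or estimate is required.
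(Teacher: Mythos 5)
Your proof is correct and follows essentially the same route as the paper: constant length via metric compatibility, then the pair of parallel distributions $\mathrm{span}\{X\}$ and its orthogonal complement fed into the preceding local-splitting lemma. You are in fact slightly more careful than the paper (excluding the trivial case $X\equiv 0$, verifying parallelism of the orthogonal complement explicitly, and normalizing the one-dimensional factor to $\di t^2$ via the unit parallel field $X/c$, a step the paper passes over silently), and these additions are sound.
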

\begin{proof}
Since $X$ is parallel, we have by definition that $\nabla X= 0$. From the Fundamental Theorem of Riemmanian Geometry we get directly that $X$ has constant length:
$$
\nabla g(X,X)= g(\nabla X, X)+ g(X, \nabla X)= 0.
$$

\vspace{0.5cm}
We can consider the distribution $E$ and $F$ to be respectively the ones that for every $p\in \M$, $E_p=\text{span}\left\{X(p)\right\}$ and $F_p=\left( \text{span}\left\{ X(p)\right\}\right)^\bot$. These are parallel because if we have two vector fields $Y_1$ and $Y_2$ tangent to $E$, then clearly $\nabla_{Y_1} Y_2$ is tangent to $E$.

\vspace{0.5cm}

Using the previous lemma we get directly that the metric is locally a product with an interval.

\end{proof}

At last we recall the following result.

\begin{lemma}\label{parallel}
Let $u$ be a harmonic function on a manifold $\M$ with Ric$(\M)\geq 0$ such that $g(\nabla u, \nabla u)=1$. Then, $X=\nabla u$ is a parallel vector field.
\end{lemma}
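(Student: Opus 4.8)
The plan is to use the Bochner formula applied to the harmonic function $u$. Recall that for any smooth function $u$ one has
\[
\frac{1}{2}\Delta_g |\nabla_g u|^2 = |\mathrm{Hess}\, u|^2 + g(\nabla_g u, \nabla_g \Delta_g u) + \mathrm{Ric}_g(\nabla_g u, \nabla_g u).
\]
Since $u$ is harmonic, $\Delta_g u = 0$, so the middle term drops. Since by hypothesis $|\nabla_g u|^2 \equiv 1$, the left-hand side vanishes as well. We are therefore left with the pointwise identity
\[
0 = |\mathrm{Hess}\, u|^2 + \mathrm{Ric}_g(\nabla_g u, \nabla_g u).
\]
The curvature assumption $\mathrm{Ric}_g \geq 0$ forces both nonnegative terms to vanish identically; in particular $\mathrm{Hess}\, u \equiv 0$ on $\M$. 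But $\mathrm{Hess}\, u$ is precisely the covariant derivative of the one-form $\di u$, equivalently (after raising an index with the metric, which is parallel) the covariant derivative of the vector field $X = \nabla_g u$. Hence $\nabla X = 0$, i.e. $X$ is parallel, which is the claim.

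A couple of points deserve care in writing this up cleanly. First, one should note that $\nabla_g u \neq 0$ everywhere (it has unit length), so $X$ is a genuine nowhere-vanishing smooth vector field and the conclusion "$X$ is parallel" is nonvacuous and globally meaningful. Second, the identification $\mathrm{Hess}\, u = \nabla(\di u)$ and the musical isomorphism turning this into $\nabla X$ is standard but should be stated, since it is exactly what converts the analytic vanishing of the Hessian into the geometric statement about the vector field. No completeness or connectedness of $\M$ is actually needed here—only $\mathrm{Ric}_g \geq 0$—though of course in our setting $\M$ is complete and connected anyway.

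The only "obstacle" is really just bookkeeping: making sure the Bochner identity is quoted in the form valid on a general Riemannian manifold and that the sign conventions for $\mathrm{Ric}_g$ and $\Delta_g$ match those used elsewhere in the paper. There is no genuine difficulty—the lemma is an immediate consequence of Bochner once $\Delta_g u = 0$ and $|\nabla_g u| = 1$ are both plugged in, and this is the classical mechanism behind the Cheeger–Gromoll splitting theorem, here invoked in its simplest local form.
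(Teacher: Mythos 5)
Your proposal is correct and takes essentially the same approach as the paper: the paper's frame computation of $\mathrm{Ric}(X)$ along integral curves of $\nabla_g u$ is precisely a hand-derivation of the Bochner identity in the special case $\Delta_g u = 0$, $|\nabla_g u|\equiv 1$, whereas you invoke the standard Bochner formula directly. Both arguments reduce to the pointwise identity $0 = |\mathrm{Hess}\, u|^2 + \mathrm{Ric}_g(\nabla_g u,\nabla_g u)$ and conclude $\nabla X = 0$ from the nonnegativity of the Ricci curvature.
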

\begin{proof}
Let $p\in \M$ and $\sigma$ be the integral curve of $\nabla u$ through $p$. Set $X=\nabla u$ and let $\{e_1,\cdots, e_{m-1}, X\}$ be an orthonormal frame in a neighborhood of $p$ which is parallel along $\sigma$. Then, $\nabla_X X=0$ because from $g(X,X)=0$ we get
$$
0=\nabla_X g(X,X)= 2 g(X, \nabla_X X),
$$
and for $e_i$ we get from the orthogonality and the condition that $e_i$ are parallel along $\sigma$,
$$
g(e_i, \nabla_X X)= \nabla_X g(e_i, X) - g(\nabla_X e_i, X)= \nabla_X 0- g(0, X)=0.
$$

At $p$, we have that
\begin{align*}
\text{Ric}(X) &= \sum_{i=1}^{m-1} g(R(e_i,X)X, e_i) + g(R(X,X)X, X)  \hspace{1.9cm} (R(X,X)X=0 \text{ from } \nabla_X X=0)\\
&=\sum_{i=1}^{m-1} g\left(\nabla_{e_i}\nabla_X X- \nabla_X \nabla_{e_i}X- \nabla_{[e_i,X]}X, e_i \right)  \hspace{1.8cm}(\text{using } \nabla_X X=0 \wedge \nabla_X e_i=0) \\
&=\sum_{i=1}^{m-1} \left[-g\left(\nabla_X \nabla_{e_i}X, e_i \right)- g\left(\nabla_{\nabla_{e_i} X} X, e_i \right)\right] \hspace{1.7cm}(\text{note } \nabla_{e_i} X=\sum_{j=1}^{m-1} g(\nabla_{e_i}X, e_j) e_j)\\
&= \sum_{i=1}^{m-1} \nabla_X \left[-g\left(\nabla_{e_i}X, e_i \right)\right]- \sum_{1\leq i,j\leq m-1} g(\nabla_{e_i}X, e_j)g(\nabla_{e_j}X, e_i) \hspace{0.8cm}(\text{using orthogonality}) \\
&= \nabla_X\left[-\text{div}(X)\right] - \sum_{1\leq k\leq m-1} g(\nabla_{e_k}X, e_k)^2 \hspace{2.5cm}(\text{div}(X)=0 \text{ from harmonicity})\\
&=- \Vert \nabla X \Vert ^2
\end{align*}
From the hypothesis that Ric$\geq 0$, we get that $\nabla X=0$, proving that $X$ is parallel.
\end{proof}
The previous geometric results are applied in the next section to prove Theorem 1 and Theorem 4. For that reason, a harmonic function, whose gradient has constant length, is constructed in a neighborhood of the point where the generalized Modica's estimate is achieved.

\section{Proof of Theorems $\ref{Thm:SplitClassicalModica}$ and $\ref{prop:Rigidity}$}

We are now ready to give the proof of Theorem \ref{prop:Rigidity}. 

\begin{proof}
Let us consider the function

\begin{equation}\label{eqn:defP}
P:= P(u,x)=\frac{1}{2}|\nabla_g u(x)|^2-F(u(x))+c_u,
\end{equation}
then by proceeding as in the proof of Theorem 1 of \cite{farina2011pointwise} we get 
\begin{equation}\label{eqn:PPMAX}
|\nabla_g u |^2 \Delta_g P -2f(u)\langle \nabla_g u, \nabla_g P \rangle-|\nabla_g P|^2 \geq |\nabla_g u |^2 Ric_g(\nabla_g u,\nabla_g u) \geq 0 \quad {\text {on}}
\quad \M
\end{equation}
Recall that $|\nabla_g u(x_0)| >0$ and  $P(u,x_0) = 0 $ by assumption. Also $ P \leq 0$ on $\M$ by \eqref{eqn:modicaGen} and therefore, in the light of \eqref{eqn:PPMAX}, the strong maximum principle 
gives that $P(u,x) = P(u,x_0) = 0$ 	in the  connected component of $\M\cap \{\nabla_g u\neq 0\}$ that contains $x_0$. The latter and \eqref{eqn:PPMAX} then imply $ Ric_g(\nabla_g u,\nabla_g u) = 0 $ in the  connected component of $\M\cap \{\nabla_g u\neq 0\}$ that contains $x_0$. Therefore we have proved the first two claims of Theorem \ref{prop:Rigidity}. 

Using the first statements of Theorem \ref{prop:Rigidity} we know that equality in \ref{eqn:modicaGen} holds in the connected component of $\M\cap \{\nabla_g u\neq 0\}$ where $x_0$ belongs, $\mathcal{U}$.

Now, from $u$ we are going to construct a harmonic function $v$ on $\mathcal{U}$ with constant gradient using a change of variables. We set $v:= H(u)$ where
$$
H(u):=\int_{u_0}^u \left(2F(s)-2c_u\right)^{-\frac{1}{2}} ds
$$
for some $u_0\in u(\mathcal{U})$. Then, $v$ has constant gradient in $\mathcal{U}$
$$
|\nabla_g v|^2 = H'(u)^2 |\nabla_g u|^2 = \frac{|\nabla_g u|^2}{2F(u)-2c_u} = 1,
$$
where we have used $\frac{1}{2}|\nabla_g u|^2=F(u)-c_u$ in the last equality. Let us check that $v$ is harmonic in $\mathcal{U}$
$$
\Delta_g v= H^{''}(u)|\nabla_g u|^2 + H'(u) \Delta_g u = -\frac{1}{2}\frac{f(u)}{(2F(u)-c_u)^{3/2}}|\nabla_g u|^2+\frac{f(u)}{(2F(u)-c_u)^{1/2}}=0
$$
using again that $\frac{1}{2}|\nabla_g u|^2=F(u)-c_u$ and $u$ being a solution of $\eqref{eqn:poisson}$.

Hence, since $v$ is a harmonic function whose gradient has length 1, we know that this generates a parallel vector field and we obtain a local splitting using the results from Section 3. 

Finally, the solution restricted to this neighborhood is 1D and monotone is straightforward.
\end{proof}

Now we turn to the proof of Theorem \ref{Thm:SplitClassicalModica}.

	\begin{proof}
		If $u$ is not constant, then by proceeding as in the first part of the proof of the previous result we see that $\vert \nabla v \vert = 1 $ on $\M$. Then,  the Bochner formula implies that the Hessian of $v$ is everywhere zero on $\M$ and so, the level sets of $v$ are totally geodesic (and isoparametric) smooth hypersurfaces (see Proposition 18 of \cite{farina2013splitting}). Moreover, from lemma \ref{parallel} we also know that $ \nabla v$ is a parallel vector field on $\M$. Therefore, if we denote by $\mathcal{N}$ the level hypesurface $\{v =0\}$ and by $ \phi_t$ the flow of $\nabla v$, it is well-known that the map $\Phi:  \mathcal{N} \times \R \to \M$ defined by 
		\[
		\Phi(x,t) := \phi_t(x)
		\]  
		is a Riemannian isometry  with respect to the product metric on $ \mathcal{N} \times \R$ (see e.g.  pp. 219-220 of \cite{Sakai} or pp. 206-207 of \cite{PRS}. See also Remark 2 on p. 145 of \cite{EH}). 
		
		Finally, in a local Darboux frame $\{e_j,\nu\ = \nabla v \}$ for the level surface $\mathcal{N}$, 
		\begin{equation}
		\begin{array}{lcl}
		0 = |II|^2 \Longrightarrow \nabla \di v (e_i,e_j) = 0, \\[0.2cm]
		0 = \langle \nabla |\nabla v|, e_j \rangle = \nabla \di v (\nu, e_j), 
		\end{array}
		\end{equation}
		so the unique nonzero component of $\nabla \di v$ is that corresponding to the pair $(\nu,\nu)$. Let $\gamma$ be any integral curve of $\nu$. Then
		$$
		\frac{\di}{\di t} (v \circ \gamma) = \langle \nabla v, \nu\rangle = |\nabla v| \circ \gamma > 0
		$$ 
		and so
		$$
		\begin{array}{lcl}
		\disp \frac{\di^2}{\di t^2} (v \circ \gamma)&=& \frac{\di}{\di t} (|\nabla v|\circ \gamma) = \langle \nabla |\nabla v|, \nu\rangle (\gamma) = \nabla \di v(\nu,\nu)(\gamma) \\[0.2cm]
		&=& \disp \Delta v (\gamma) = F'(v\circ \gamma).
		\end{array}
		$$
		Therefore $y = v\circ \gamma$ is a solution of the ODE $y'' = F'(y)$ and $y' >0$.

	\end{proof}

\bibliography{references}
\bibliographystyle{alpha}

\end{document}